\title{ On a set of some recent contributions to energy equality for the Navier-Stokes equations.}
\author{Hugo Beir\~{a}o da Veiga$^{1,}$ \footnote{Partially supported  by FCT (Portugal) under the project: UIDB/MAT/04561/2020.}
\qquad Jiaqi Yang$^{2,}$\footnote{Hugo Beir\~{a}o da Veiga (\texttt{hbeiraodaveiga@gmail.com}) and Jiaqi Yang (\texttt{yjq@nwpu.edu.cn})}}
\date{
\small $^1$ Department of Mathematics, Pisa University, Pisa, Italy\\
\small $^2$ School of Mathematics and Statistics,
Northwestern Polytechnical University,
Xi'an, 710129, China}
\newtheorem{theorem}{Theorem}[section]
\newtheorem{proposition}[theorem]{Proposition}
\theoremstyle{remark}
\newtheorem{remark}{Remark}[section]
\theoremstyle{definition}
\newtheorem{definition}[theorem]{Definition}
\numberwithin{equation}{section}
\newcommand{\p}{\partial}
\newcommand{\R}{\mathbb{R}}
\newcommand{\f}{\frac}
\newcommand{\n}{\nabla}
\newcommand{\tha}{\theta}
\newcommand{\ed}{\end{document}}
\newcommand{\na}{{\nabla}}
\newcommand{\pa}{{\partial}}
\newcommand{\Om}{{\Omega}}
\begin{document}
\maketitle
\begin{abstract}

In these notes we want, in addition to presenting some recent results, to both clean up and refine some reflections on a couple of articles published on paper a few years ago, 2019-20. These papers concerned integral sufficient conditions on $u\,,$ $\n u\,,$ and mixed, to guarantee the equality of the energy, EE in the sequel, for solutions of the Navier-Stokes equations under the classical non-slip boundary condition. Concerning the $\n u$ case a crucial role was enjoyed by a previous well known Berselli and Chiodaroli's pioneering 2019 work on the subject. The above three papers are the main sources of these notes. References will be mostly concentrated on their direct relation to the above papers at the time of pubblication. More recent results will be not stated throughout the article. However, in the last section, the reader will be suitably sent to the more recent bibliography.\par%
Below, we also turn back to the innovative interpretation of some main parameters which allowed to overcome their apparent incongruence.\par%
Non-Newtonian fluids were also considered in our 2019 paper, maybe for the first time in the above Berselli-Chiodaroli's particular $\n u\,$ context. However we will stick mostly to the Newtonian case since in the end we come to the conclusion that there are not particular additional obstacles to extend the present results from Newtonian to non-Newtonian fluids. Hence we avoid to go further in this direction.%
\end{abstract}

\noindent \textbf{Mathematics Subject Classification:} 35Q30, 76A05, 76D03.

\vspace{0.2cm}
\noindent \textbf{Keywords:} Navier-Stokes equations; Energy equality; Conditions of integral type.

\vspace{0.2cm}

\section{Introduction. The energy equality and inequality.}
For a classical treatment of the energy equality problem for the Navier-Stokes equations we may recommend the fourth chapter of Galdi's work \cite{galdi-2000}. This reference also provides an expert treatment of the most important classic topics on the Navier-Stokes equations. Another main reference on the subject is Temam's famous treatise \cite{Tem}.\par%
In the following we are interested on integral assumptions which guarantees the energy equality (EE in the sequel) for weak (Leray-Hopf) solutions to Newtonian incompressible fluids
\begin{equation}\label{eq:NS}
\begin{cases}
\pa_t u+\,u\cdot\nabla\,u-\Delta u+\nabla \pi=0\,,\quad &\textrm{in $\Omega\times(0,T)$}\,,\\
\nabla \cdot\,u=0\,,\quad& \textrm{in $\Omega\times(0,T)$}\,,\\
u=0\,,\quad& \textrm{on $\pa\Omega\times(0,T)$}\,,\\
u(\cdot,\,0)=\,u_0\,, \quad &\textrm{in $\Omega$}\,,
\end{cases}
\end{equation}
where $\,\Om\,$ is a bounded, smooth domain in $\,\R^3\,.$ The energy equality reads
\begin{equation}\label{eq:EE}
\int_{\Omega}|u(t_0)|^2dx+2\int_0^{t_0}\int_{\Omega}|\na u(\tau)|^2\,dxd\tau=\int_{\Omega}|u_0|^2dx\,,\quad \textrm{for any} \quad t_0\in[0,T).
\end{equation}

\vspace{0.2cm}

Presently we have arrived at the conclusion that comparing the effectiveness of distinct integrability conditions yielding the energy equality is not suitable, or needs further ideas, when the number of space derivatives is different, or when the type of spaces to compare is too different (comparison between Sobolev and H\"older spaces, for example). This is now quite evident, and we have decided not to loose too much room for explanation in this regard. Since we have been interested on assumptions on the solution $u$ itself and on its first derivatives $\n u\,,$ we separate assumptions of $u-$type and assumptions of $\n u-$type, and appeal to two distinct SH numbers, $\theta$ and $\theta^{*},$ as useful tools. This useful device had not been used in reference \cite{BV-JY}.
\section{Assumptions of $\,u-$type. The Newtonian case.}
In reference \cite{shinbrot}  M.Shinbrot shows that if a weak Leray-Hopf solution $\,u\,$ to the Navier-Stokes equations \eqref{eq:NS} satisfies
\begin{equation}\label{shin}
u \in\,L^p (0,\,T; L^s(\Omega))\,,
\end{equation}
where
\begin{equation}\label{shonas}
\frac 2p + \frac 2s =\,1\,,\quad  \textrm{and} \quad s\geq\,4,
\end{equation}
then $\,u\,$ satisfies the energy equality. This result is a generalization of previous results due to G.Prodi \cite{Prodi}  and J.L.Lions \cite{lions}, where these authors proved the above result for $p=\,s=\,4\,.$\par%
For convenience we write the condition \eqref{shin}, \eqref{shonas} in the equivalent form
\begin{equation}\label{shin1}
u \in\,L^{\f{2s}{s-2}}(0,\,T; L^s(\Omega))\,,\quad s\geq 4\,.
\end{equation}
\begin{definition}
For $u$ satisfying \eqref{shin} and \eqref{shonas}, we call \emph{Shinbrot number} (SH) the quantity
\begin{equation}\label{shinum}
\tha(p,s)=:\frac 2p + \frac 2s\,.
\end{equation}
For $u$ satisfying
\begin{equation}\label{shiq}
u \in\,L^p (0,\,T; W^{1,\,q}(\Omega))\,,
\end{equation}
with $1<q<3\,,$ we define the SH number
\begin{equation}\label{shindo}
\tha^{*}(p,q)=:\frac 2p + \f{2}{q^{*}}\,, \quad q^{*}= \f{3q}{3-q}\,.
\end{equation}
\end{definition}
The role in \eqref{shindo} of Sobolev's embedding theorem $W^{1,q} \subset L^{q^{*}}$ is clear.\par%
In the following $H$ denotes the completion of $\mathcal{V}=\{\phi \in C^{\infty}_0(\Om):\,\n\cdot \phi=\,0\,\}$ in $L^2(\Omega)$.\par%
In reference \cite{BV-JY-SHINB} Theorem 1.1 we have shown that Shinbrot's criteria follows trivially, by interpolation, from the $\,L^4( (0\,,T)\times\Om))$ particular case. Moreover we have extended Shinbrot's result to the case (ii) below, as follows.
\begin{theorem}\label{theo-main}
Let $u_0 \in H $  and let $u$ be a Leray-Hopf weak solution of the Navier-Stokes equations \eqref{eq:NS}. Assume that $u$ satisfies one of the two conditions below:
\begin{equation}\label{pqcases}
\begin{cases}
(i)\quad  u \in\,L^p(0,\,T; L^s(\Omega))\,,\quad s\in [4,\,\infty]\,, \quad p=\f{2s}{s-2}.\\
(ii)\quad  u \in\, L^p(0,\,T; L^s(\Om))\,, \quad  s\in [3,\,4]\,, \quad p=\frac{s}{s-3}.
\end{cases}
\end{equation}
Then $u$ satisfies the energy equality
\begin{equation}\label{eneq}
\| u(t_0)\|_2^2 \,+\, 2\,\int_{0}^{t_0} \,\|\n u(\tau)\|_2^2 \,d\tau=\,\| u_0\|_2^2\,,
\end{equation}
for any $t_0\in [0,T)$, with $\tha=1$ in case (i) and $\,\tha= 2-\f{4}{s}<1\,$ in case (ii).
\end{theorem}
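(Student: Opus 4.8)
The plan is to collapse both cases onto the single base case $u\in L^4((0,T)\times\Omega)$, to prove energy equality there, and then to recover (i) and (ii) by one interpolation each against the Leray--Hopf bounds. The conceptual content lives entirely in the base case; the two reductions are elementary.

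\textbf{The base case $u\in L^4_{t,x}$.} Here $u\otimes u\in L^2((0,T)\times\Omega)$, while $\nabla u\in L^2((0,T)\times\Omega)$ by the Leray--Hopf bound. First I would regularize in time, $u^h=\rho_h*_t u$, insert $u^h$ as a divergence-free test function in the weak formulation, and integrate over $[0,t_0]$. The time-derivative term gives $\tfrac12\|u^h(t_0)\|_2^2-\tfrac12\|u^h(0)\|_2^2$, and the viscous term converges to $\int_0^{t_0}\|\nabla u\|_2^2\,d\tau$. The only delicate term is the convective one: since $u\otimes u\in L^2_{t,x}$ pairs with $\nabla u\in L^2_{t,x}$, the trilinear integral $\int_\Omega(u\cdot\nabla u)\cdot u\,dx$ is absolutely convergent and, using $\nabla\cdot u=0$ together with $u|_{\partial\Omega}=0$, equals $-\tfrac12\int_\Omega(\nabla\cdot u)\,|u|^2\,dx=0$. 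Passing $h\to0$ and matching the regularized convective contribution to this vanishing limit produces \eqref{eneq}.

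\textbf{Reduction of case (i).} For $s\in[4,\infty]$ with $\tfrac2p+\tfrac2s=1$ I would interpolate the hypothesis $u\in L^p(0,T;L^s)$ against $u\in L^\infty(0,T;L^2)$ in the mixed norm. The admissible parameter is $\lambda=p/4$, legitimate precisely because $s\ge4$ forces $p\le4$, and it sends $u$ into $L^4(0,T;L^4)=L^4((0,T)\times\Omega)$; the base case then applies. The endpoint $s=4$ is already $L^4_{t,x}$.

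\textbf{Reduction of case (ii).} For $s\in[3,4]$ the exponent $p=\tfrac{s}{s-3}$ is larger, so instead I would interpolate the hypothesis against the Sobolev-improved energy bound $u\in L^2(0,T;L^6)$ supplied by $H^1_0\hookrightarrow L^6$. Requiring both the space and the time target exponents to equal $4$ is a priori two constraints for the single parameter $\lambda$; a direct computation shows they are compatible exactly when $\tfrac1p=\tfrac{s-3}{s}$, i.e. $p=\tfrac{s}{s-3}$, which is precisely the relation assumed in \eqref{pqcases}(ii), and the resulting $\lambda=\tfrac{3(s-4)}{2(s-6)}\in[0,\tfrac12]$ is admissible throughout $s\in[3,4]$. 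Hence again $u\in L^4((0,T)\times\Omega)$, and the two ranges glue continuously at $s=4$, $p=4$.

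The main obstacle is the base case, and within it the rigorous passage $\int_0^{t_0}\int_\Omega(u\cdot\nabla u)\cdot u^h\to 0$ as $h\to0$: this is exactly where the membership $u\otimes u\in L^2_{t,x}$ (equivalently $u\in L^4_{t,x}$) is indispensable, controlling the Friedrichs-type commutator between the mollifier and the quadratic term. Once this is in hand, the two interpolations are routine and the theorem follows.
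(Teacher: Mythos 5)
Your proposal is correct and follows essentially the same route as the paper's (i.e.\ reference \cite{BV-JY-SHINB}'s) proof: everything is reduced to the classical Lions--Prodi case $u\in L^4((0,T)\times\Omega)$, with case (i) recovered by interpolating the hypothesis against the Leray--Hopf bound $u\in L^\infty(0,T;L^2)$ and case (ii) against $u\in L^2(0,T;L^6)$, and your verification that the single interpolation parameter satisfies both the time and space constraints exactly when $p=\frac{2s}{s-2}$ (resp.\ $p=\frac{s}{s-3}$) is the computation at the heart of that argument. No gaps worth noting.
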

The results in item (i) and (ii) glue for the common value $r=4$. It looks significant that in correspondence to the extreme values $s=3$ and $s=\infty$, and \emph{only} for these two values, the assumptions imply the result simply by appealing to the classical Ladyzhenskaya-Prodi-Serrin (L-P-S) type conditions for regularity
\begin{equation}\label{LPS}
u \in\,L^p (0,\,T; L^q(\Omega))\,,\quad \frac 2p + \frac 3q =\,1\,,\quad \textrm{for} \quad q \in [3,\,\infty].
\end{equation}
In fact, our conditions become $u \in L^\infty(L^3)$ and $u \in L^2(L^\infty)\,,$ respectively.%

\vspace{0.2cm}

The result in item (ii) succeed in proving, as a particular case, the result stated in item (j), Theorem 3.3 below, for $\,q\leq \f{12}{7}\,$. In fact the following result holds.
\begin{proposition}\label{corshin}
Assume that
\begin{equation}\label{coros}
\na u \in\, L^\f{q}{2q-3}  (0,\,T; L^q(\Om))\,, \quad  \f32 \leq q \leq \f{12}{7}\,.
\end{equation}
Then item (ii) in Theorem \ref{theo-main}, namely,
\begin{equation}\label{esses}
u \in\, L^\f{s}{s-3}(0,\,T; L^s(\Om))\,, \quad  3\leq s \leq 4\,,
\end{equation}
holds true.
\end{proposition}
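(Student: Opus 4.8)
The plan is to derive the conclusion directly from the Sobolev embedding theorem applied slice-by-slice in time, the whole content reducing to an exponent bookkeeping already anticipated in the very definition of the SH number $\tha^{*}$. First I would fix $q\in[\f32,\f{12}{7}]$ and set $s=q^{*}=\f{3q}{3-q}$. A short computation shows that $s$ increases continuously from $3$ to $4$ as $q$ runs over $[\f32,\f{12}{7}]$: indeed $q=\f32$ gives $q^{*}=3$ and $q=\f{12}{7}$ gives $q^{*}=4$. Hence the target range $3\le s\le 4$ in \eqref{esses} is exactly the image of the source range $\f32\le q\le\f{12}{7}$ in \eqref{coros} under the Sobolev conjugation $q\mapsto q^{*}$.

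Next I would invoke, for almost every fixed $t$, the Sobolev--Poincar\'e inequality $\|u(t)\|_{L^{q^{*}}(\Om)}\le C\,\|\na u(t)\|_{L^{q}(\Om)}$, which is legitimate because $u(t)\in W^{1,q}_0(\Om)$ thanks to the no-slip condition $u=0$ on $\pa\Om$. (Alternatively, since $u$ is Leray-Hopf one has $u\in L^\infty(0,T;L^2(\Om))$, hence $u\in L^\infty(0,T;L^q(\Om))$ on the bounded domain $\Om$ for $q\le 2$, so the full $W^{1,q}$ norm is controlled.) Raising this pointwise inequality to the power $p$ and integrating in time gives
\begin{equation*}
\|u\|_{L^{p}(0,T;L^{s}(\Om))}\le C\,\|\na u\|_{L^{p}(0,T;L^{q}(\Om))}<\infty,
\end{equation*}
where $p=\f{q}{2q-3}$ is precisely the time exponent appearing in \eqref{coros}.

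The last step, and the only place where something must be checked, is to verify that this time exponent $p$ coincides with the one demanded in \eqref{esses}, namely $\f{s}{s-3}$. With $s=\f{3q}{3-q}$ one computes $s-3=\f{3(2q-3)}{3-q}$, whence $\f{s}{s-3}=\f{q}{2q-3}=p$. Therefore $u\in L^{\f{s}{s-3}}(0,T;L^s(\Om))$ with $3\le s\le 4$, which is exactly item (ii) of Theorem \ref{theo-main}, and the proposition follows.

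I do not expect a genuine obstacle: the statement is in essence the assertion that Sobolev embedding in the space variable converts the $\na u$-type assumption into the $u$-type assumption with perfectly matched time and space exponents, the matching being guaranteed by the algebraic identity $\f{s}{s-3}=\f{q}{2q-3}$. The single point deserving a word of care is the use of the zero-boundary-value form of the embedding rather than the inhomogeneous one, so that the conclusion is phrased purely in terms of $\na u$; this is exactly what the no-slip condition provides.
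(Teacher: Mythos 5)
Your proposal is correct and follows essentially the same route as the paper: apply the Sobolev embedding $W^{1,q}\subset L^{q^{*}}$ slice-by-slice and check that $s=\f{3q}{3-q}$ maps $[\f32,\f{12}{7}]$ onto $[3,4]$ with $\f{s}{s-3}=\f{q}{2q-3}$. The extra care you take about the zero-boundary-value form of the embedding is a welcome detail the paper leaves implicit, but it does not change the argument.
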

\begin{proof}
By a Sobolev's embedding theorem it follows that \eqref{coros} implies that
\begin{equation}\label{xis}
u \in\, L^\f{q}{2q-3}(0,\,T; L^\f{3q}{3-q}(\Om))\,,\quad \f32 \leq q \leq \f{12}{7}\,,
\end{equation}
which is equivalent to \eqref{esses}, as shown by setting $\,s=\f{3q}{3-q}$.
\end{proof}
Note that the full range in item (ii) was used in the proof. Moreover we have shown that \eqref{xis} yields EE.

\vspace{0.2cm}

\begin{remark}\label{lembrar}
We have just shown two partial generalizations of the above referred item (j). Question: Which of the two extensions is more general? Actually, item (k) would be much stronger than \eqref{xis} by assuming that it is equivalent to $\, u \in\, L^{\f{6q}{5q-6}} (L^{\f{3q}{3-q}})\,$. However it does not even imply the (weaker) assumption \eqref{xis}, which yields EE either. This remark has a general character, applicable each time we appeal to embedding theorems, which may lead to interesting, but rough, comparisons.
\end{remark}

\vspace{0.2cm}

Is is worth noting that for $ 3\leq s \leq 4\,$ the SH number $\tha^{*}$ of assumption \eqref{coros}
(write it in $s-$terms) is always equal to $1\,.$ However the more general assumption (ii), which yields \eqref{coros}, has a "worse" SH number $\tha=\,2-\f4s <\,1\,.$ This shows, once more, that comparison between  $\tha$ and $\tha^{*}$ by means of Sobolev's embedding Theorem is questionable.
\section{Assumptions of $\,\n u-$type in the non-Newtonian case.}
In a first overlook the reader may skip this section and go directly to the next one. Concerning the mathematical theory in the non-Newtonian case we refer, for instance, to \cite{Galdi-nonew}).\par%
Assumptions of $\,u-$type in the non-Newtonian case have been studied by J. Yang in reference \cite{Yang}. On the other hand, in reference \cite{BC}, the authors introduce sufficient conditions to obtain the EE, in terms of $\n u$. These facts led us to extend in \cite{BV-JY} the results proved in \cite{BC} to weak solutions to non-Newtonian ($r\neq2$) incompressible fluids, namely
\begin{equation}\label{eq:non-Newtonian system}
\begin{cases}
u_t+u\cdot\n u-\text{div}\left(|D(u)|^{r-2}D(u)\right)+\n \pi=0, &\text{in $\Omega\times(0,T)$},\\
\n\cdot\, u=0, &\text{in $\Omega\times(0,T)$},\\
u=0,&\text{in $\p\Omega\times(0,T)$},\\
u(\cdot,0)=u_0, &\text{in $\Omega$},
\end{cases}
\end{equation}
where
\[
D(u)=\f{1}{2}\left(\n u+(\n u)^T\right)
\]
is the symmetric part of the velocity gradient, and $\Omega\subset\R^3$ is a bounded domain, with smooth boundary $\p\Omega$.\par%
Note that we consider an extra stress tensor of the "singular" form
$$
(\mu +\,|D(u)|^{r-2}) D(u) \quad \textrm{with} \quad \mu=\,0\,.
$$
In the simpler case $\mu >\,0\,$ the proofs still hold.

\vspace{0.2cm}

The energy equality reads
\begin{equation}\label{eq:energ}
\int_{\Omega}|u(t_0)|^2dx+2\int_0^{t_0}\int_{\Omega}|D(u)(\tau)|^r\,dxd\tau=\int_{\Omega}|u_0|^2dx\,,
\end{equation}
for any $t_0\in[0,T)$.\par%

The very basic result in reference \cite{BV-JY} was its Proposition 4.1. However, after an appropriate review, we arrived to the following formulation, the Theorems 4.3 in \cite{BV-JY}.
\begin{theorem}\label{thm:largelev}
Let $u_0\in H$ and let $u$ be a Leray-Hopf weak solution of \eqref{eq:non-Newtonian system}, for some finite $\,r>\,\f85\,,$ in a smooth bounded domain $\Omega$. Let us assume that $\n u\in L^p(0,T;L^q(\Omega))$ for the following ranges of the exponents $p\,,q$:
\begin{description}
  \item[(i)$_1$] $\f{9}{5}< r \leq 2\,$, $\f{9-3r}{2}<q\leq\f95\,,$ and $\n u\in L^{\f{q(5r-9)}{3r+2q-9}}(0,T;L^q(\Omega))$;
  \item[(i)$_2$] $2<r<\f{11}{5}$, $\f{3r}{5r-6}\leq q\leq\f95\,,$ and $\n u\in L^{\f{q(5r-9)}{3r+2q-9}}(0,T;L^q(\Omega))$;
 \item[(ii)$_1$]  $\,r<\f{11}{5},\,$ $\f95<\,q\,$, and $\n u\in L^{\f{5q}{5q-6}}(0,T;L^q(\Omega))$\,;
  \item[(ii)$_2$]  $\,r\geq \f{11}{5}$.
\end{description}
Then $u$ satisfies the energy equality \eqref{eq:energ}.
\end{theorem}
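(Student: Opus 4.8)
The plan is to establish \eqref{eq:energ} by regularizing the weak solution in the time variable, using the regularization as a test function in the weak formulation of \eqref{eq:non-Newtonian system}, and passing to the limit. Let $u^\epsilon=u*\rho_\epsilon$ denote the mollification of $u$ in $t$ by a standard even mollifier $\rho_\epsilon$; since mollification in time commutes with $\n$ and preserves the divergence-free and homogeneous boundary constraints, $u^\epsilon$ is an admissible divergence-free test function vanishing on $\pa\Om$. First I would insert $u^\epsilon$ into the weak formulation on $(0,t_0)$ and treat the three resulting groups of terms separately.

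For the linear terms the limits are standard. The term carrying $\pa_t u$ produces, after symmetrization in the mollifier and the passage $\epsilon\to0$, the energy gap $\f12\big(\|u(t_0)\|_2^2-\|u_0\|_2^2\big)$, using $u\in L^\infty(0,T;L^2)$ and the weak time-continuity of Leray--Hopf solutions. The stress term $\int_0^{t_0}\int_\Om |D(u)|^{r-2}D(u):D(u^\epsilon)$ converges to $\int_0^{t_0}\int_\Om|D(u)|^r$, since $|D(u)|^{r-2}D(u)\in L^{r'}$ while $D(u^\epsilon)\to D(u)$ strongly in $L^r$. The whole difficulty is thus concentrated in the convective term.

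Integrating the convective term by parts and using $\n\cdot u=0$ with $u^\epsilon=0$ on $\pa\Om$ gives $\int_0^{t_0}\int_\Om (u\cdot\n u)\cdot u^\epsilon=-\int_0^{t_0}\int_\Om (u\otimes u):\n u^\epsilon$. Formally $-\int (u\otimes u):\n u=0$ by incompressibility, so it suffices to show $u\otimes u:\n u\in L^1((0,t_0)\times\Om)$ and that the commutator $\int (u\otimes u):\n(u^\epsilon-u)$ vanishes; both reduce to one integrability statement, and this is exactly where the hypotheses enter. The available a priori bounds are $u\in L^\infty(0,T;L^2)$ and, via Korn's inequality, $u\in L^r(0,T;W^{1,r})\hookrightarrow L^r(0,T;L^{3r/(3-r)})$, whose interpolation yields $u\in L^{5r/3}((0,T)\times\Om)$. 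When $r\geq\f{11}{5}$ one has $\f{5r}{3}\geq\f{2r}{r-1}$, hence $u\otimes u\in L^{r'}$ pairs against $\n u\in L^r$ and the convective term is controlled \emph{with no extra assumption}: this is case (ii)$_2$. When $r<\f{11}{5}$ the energy bounds alone fail, and I would bring in $\n u\in L^p(0,T;L^q)$: pairing $u\otimes u$ against $\n u^\epsilon$ in the duality $L^{p'}(L^{q'})$--$L^p(L^q)$ requires $u\in L^{2p'}(0,T;L^{2q'})$, to be verified by a Hölder/Gagliardo--Nirenberg interpolation among $L^\infty(L^2)$, $L^r(L^{3r/(3-r)})$ and the Sobolev embedding $W^{1,q}\hookrightarrow L^{3q/(3-q)}$ supplied by the hypothesis.

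The heart of the matter, and the main obstacle, is the exponent bookkeeping that makes this interpolation close, together with the optimality of $p=\f{q(5r-9)}{3r+2q-9}$ in cases (i)$_1$--(i)$_2$. The threshold $q=\f95$ is precisely where the spatial integrability $2q'$ demanded by the duality equals the integrability $q^{*}=\f{3q}{3-q}$ furnished by the Sobolev embedding of the hypothesis: for $q>\f95$ there is spatial room and only the time exponent $p=\f{5q}{5q-6}$ need be tuned (case (ii)$_1$), whereas for $q\leq\f95$ one is forced into a genuine three-term interpolation between the two energy bounds and the hypothesis, producing the $r$-dependent exponent above (cases (i)$_1$, (i)$_2$). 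I expect the delicate points to be the endpoint regimes near $r=\f95$ and $r=\f{11}{5}$ and the verification that the nonlinear stress passes to the limit jointly with the mollification; the convective estimate, not the linear terms, is where all four sets of assumptions are genuinely consumed.
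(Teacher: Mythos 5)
First, a point of reference: the paper does not actually prove Theorem \ref{thm:largelev}; it quotes it from Theorem 4.3 of \cite{BV-JY}. Your overall strategy --- mollify in time, test, observe that the linear and stress terms converge routinely, and reduce everything to showing $u\otimes u:\n u\in L^1$ via interpolation of the energy bounds $u\in L^\infty(L^2)\cap L^r(W^{1,r})$ against the hypothesis $\n u\in L^p(L^q)$ --- is exactly the approach of that source, and your structural observations are correct: the case $r\geq\f{11}{5}$ follows from $u\in L^{5r/3}((0,T)\times\Om)$ alone, and $q=\f95$ is precisely the crossover where $2q'=q^{*}=\f{3q}{3-q}$, separating the two-space interpolation of (ii)$_1$ from the three-space interpolation of (i)$_1$--(i)$_2$. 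For (ii)$_1$ your scheme does close: interpolating $L^\infty(L^2)$ with $L^{\f{5q}{5q-6}}(L^{q^{*}})$ at parameter $\te=\f{3}{5q-6}$ lands exactly in $L^{5q/3}(L^{2q/(q-1)})=L^{2p'}(L^{2q'})$.

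The genuine gap is that you stop exactly where the theorem has content. In cases (i)$_1$ and (i)$_2$ the entire statement \emph{is} the exponent $p=\f{q(5r-9)}{3r+2q-9}$ together with the lower bounds $q>\f{9-3r}{2}$ (for $\f95<r\leq2$) and $q\geq\f{3r}{5r-6}$ (for $2<r<\f{11}{5}$), and none of these is derived: you announce a ``three-term interpolation'' among $L^\infty(L^2)$, $L^r(L^{3r/(3-r)})$ and $L^p(L^{3q/(3-q)})$ but never solve the system $\f{1}{2p'}=\f{\bt}{r}+\f{\g}{p}$, $\f{1}{2q'}=\f{\al}{2}+\f{\bt(3-r)}{3r}+\f{\g(3-q)}{3q}$, $\al+\bt+\g=1$ with $\al,\bt,\g\geq0$, which is where the displayed value of $p$ and the $q$-ranges (as the constraints that the weights stay in $[0,1]$) come from, and which is also where one sees why the two subcases $r\leq2$ and $r>2$ produce different lower bounds on $q$. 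Likewise unexplained are the role of the global hypothesis $r>\f85$ (needed for the Leray--Hopf theory of \eqref{eq:non-Newtonian system} itself, not for the convective estimate) and the degeneration of (i)$_1$ as $r\downarrow\f95$, where the admissible $q$-interval becomes empty. As written, the proposal is a correct and well-oriented plan, but the computation it defers is not a routine verification: it is the proof.
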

Another main result in reference \cite{BV-JY} is its Theorem 4.4 (Theorem \ref{thm:BV} below), which applies also for $r\neq 2\,.$ See section \ref{chave} below.\par%
\section{Assumptions of $\,\n u-$type in the Newtonian case.}
In references \cite{BC} and \cite{BV-JY} the energy equality is studied under conditions of type \eqref{shiq} instead of type \eqref{shin1}. The starting point was reference \cite{BC} where the authors gave a very important contribution to the study of the energy equality to solutions of the Navier-Stokes equations by improving, in a quite substantial way, several previous known results. Their Theorem 1 states the following result.

\begin{theorem}\label{bers-chio}
(BERSELLI and CHIODAROLI) Let $u_0 \in H $ and let $u$ be a Leray-Hopf weak solution of the Navier-Stokes equations \eqref{eq:NS} below, and assume that $\,\na u \in\, L^p(0,\,T; L^q(\Om))\,$ for the following ranges of the exponents $p,\,q$:
\begin{equation}\label{pqcases}
\begin{cases}
(j)\quad \frac32 <q< \frac95 \,, \quad \textrm{and}\quad \na u \in\, L^{\frac{q}{2q-\,3}}(0,\,T; L^q(\Om))\,\,;\\
(jj)\quad \frac95 \leq q<3 \,, \quad \textrm{and}\quad \na u \in\, L^{\frac{5q}{5q-\,6}}(0,\,T; L^q(\Om))\,\,;\\
(jjj)\quad 3\leq q \,, \quad \textrm{and}\quad \na u \in\, L^{1+\,\frac{2}{q}}(0,\,T; L^q(\Om))\,\,.\\
\end{cases}
\end{equation}
Then $u$ satisfies the energy equality
\begin{equation}\label{eneq}
\| u(t_0)\|_2^2 \,+\, 2\,\int_{0}^{t_0} \,\|\n u(\tau)\|_2^2 \,d\tau=\,\| u_0\|_2^2\,,
\end{equation}
for any $t_0\in [0,T)$.
\end{theorem}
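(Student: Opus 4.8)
The plan is to prove the energy identity \eqref{eneq} by the classical strategy of using a regularized version of the solution itself as a test function in the weak formulation of \eqref{eq:NS}, and then passing to the limit. Since every Leray--Hopf solution already satisfies the energy inequality (the ``$\leq$'' in \eqref{eneq}), the whole difficulty is to establish the reverse inequality, and this reduces to showing that the contribution of the convective term to the approximate energy balance tends to zero. Concretely, I would introduce a spatial mollification $u_\epsilon$ (a divergence-free regularization compatible with the non-slip condition; near $\partial\Omega$ this step must be carried out with care, exploiting $u\in H^1_0(\Omega)$, and is the source of the only genuinely technical, as opposed to conceptual, complication), test the equation against $u_\epsilon$, integrate over $(0,t_0)\times\Omega$, and let $\epsilon\to 0$. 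The linear terms and the time-derivative term converge by standard arguments, so that the energy identity follows once the convective defect is shown to vanish.

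The key algebraic observation is that, after one integration by parts and using $\nabla\cdot u=0$ together with the boundary condition, the convective contribution can be recast in commutator form,
\[
\int_0^{t_0}\!\!\int_\Omega (u\cdot\nabla u)\cdot u_\epsilon\,dx\,d\tau = -\int_0^{t_0}\!\!\int_\Omega (u\cdot\nabla u_\epsilon)\cdot(u-u_\epsilon)\,dx\,d\tau,
\]
the point being that the right-hand side contains the difference $u-u_\epsilon$, which is small. Thus the task becomes to prove that this commutator converges to $0$ as $\epsilon\to0$, which in particular yields the (a priori merely formal) identity $\int(u\cdot\nabla u)\cdot u=0$ in the limit.

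To estimate the commutator I would apply Hölder's inequality in space to extract the controlled factor $\|\nabla u_\epsilon\|_q\leq\|\nabla u\|_q$, leaving a product $\|u\|_{2q'}\,\|u-u_\epsilon\|_{2q'}$ with $q'=q/(q-1)$; the remaining two factors are then handled by combining three ingredients: the Leray--Hopf bounds $u\in L^\infty(0,T;H)\cap L^2(0,T;H^1_0)$, the assumption $\nabla u\in L^p(0,T;L^q)$ upgraded through Sobolev's embedding to $u\in L^p(0,T;L^{q^*})$ with $q^*=3q/(3-q)$ (for $q<3$), and the first-order mollification gain $\|u-u_\epsilon\|_2\lesssim \epsilon\,\|\nabla u\|_2$. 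The way these ingredients are balanced, by interpolation in both space and time, is exactly what produces the three exponent ranges $(j)$, $(jj)$, $(jjj)$. Range $(jjj)$, $q\geq 3$, is the mildest: there $u$ is so integrable that the nonlinear term is essentially controlled as in the Ladyzhenskaya--Prodi--Serrin/Shinbrot setting \eqref{LPS}. Range $(jj)$, $9/5\leq q<3$, is the regime in which the Sobolev embedding is strong enough that $W^{1,q}\hookrightarrow L^{2q'}$ (this is precisely the inequality $q^*\geq 2q'$, which holds iff $q\geq 9/5$), so that the needed spatial integrability of $u$ is available directly and one only has to interpolate in time against the Leray--Hopf scale.

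The main obstacle is the low range $(j)$, $3/2<q<9/5$. Here the Sobolev embedding no longer reaches the exponent $2q'$, the naive ``trilinear'' bound $\int_0^{t_0}\|\nabla u\|_q\,\|u\|_{2q'}^2\,d\tau<\infty$ genuinely fails, and, as the excerpt emphasizes, the corresponding SH number $\theta^{*}$ of \eqref{shindo} drops below $1$ (it degenerates to $2/3$ as $q\to 3/2^{+}$), so no crude embedding argument can succeed. The resolution must exploit the commutator cancellation to the full: one has to use the smallness of $u-u_\epsilon$ (through the $\epsilon$-gain from $u\in L^2(H^1)$, interpolated between $L^2$ and $L^6$) in order to lower the time-integrability demanded of $u$, and then close the estimate with the sharpest admissible interpolation between the Leray--Hopf bounds and $u\in L^p(L^{q^*})$. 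Getting these exponents to match on the scaling line throughout the entire sub-range, with the power of $\epsilon$ strictly positive, is the delicate technical heart of the argument, and is precisely the point at which Berselli and Chiodaroli improved substantially on the previously known results.
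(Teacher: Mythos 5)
First, note that the paper you are comparing against does not actually prove this statement: Theorem \ref{bers-chio} is quoted verbatim as Theorem 1 of Berselli and Chiodaroli \cite{BC}, and the present notes contain no proof of it. So your proposal can only be measured against the known argument of \cite{BC}. Your overall skeleton is the right one (regularize, test with the regularized solution, reduce everything to the vanishing of a commutator of the form $\int_0^{t_0}\!\!\int_\Omega (u\cdot\nabla u_\epsilon)\cdot(u-u_\epsilon)$, and control it by $\|\nabla u_\epsilon\|_q\,\|u\|_{2q'}\,\|u-u_\epsilon\|_{2q'}$), and your threshold computation $q^*\geq 2q'\iff q\geq 9/5$ is correct. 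Two caveats: in the no-slip bounded-domain setting the regularization is normally taken in \emph{time} (Steklov averaging), precisely to avoid the boundary/divergence-free compatibility problem you flag; and you never derive the exponents $\frac{5q}{5q-6}$ and $1+\frac{2}{q}$ in ranges $(jj)$ and $(jjj)$, although the mechanism you describe there (interpolation of $\|u\|_{2q'}$ between $L^\infty(0,T;L^2)$ and $L^p(0,T;L^{q^*})$, resp.\ $L^\infty$) does produce them.

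The genuine gap is in range $(j)$, which is exactly the new and hard part of the theorem, and there your proposal is not merely incomplete but starts from a false premise. You assert that the trilinear bound $\int_0^{t_0}\|\nabla u\|_q\,\|u\|_{2q'}^2\,d\tau<\infty$ ``genuinely fails'' for $\tfrac32<q<\tfrac95$ and that one must instead extract a positive power of $\epsilon$. In fact that trilinear bound is precisely what holds and is how the theorem is proved in this range: since $q^*<2q'<6$ there, one interpolates
\[
\|u\|_{2q'}\leq \|u\|_{q^*}^{1-\lambda}\,\|u\|_{6}^{\lambda}\lesssim \|\nabla u\|_{q}^{1-\lambda}\,\|\nabla u\|_{2}^{\lambda},
\qquad \lambda=\frac{5q-9}{3(q-2)}\in(0,1),
\]
using the Leray--Hopf dissipation bound $\nabla u\in L^2(0,T;L^2)$, and then H\"older in time with $\frac{3-2\lambda}{p}+\frac{2\lambda}{2}=1$ gives exactly $p=\frac{q}{2q-3}$. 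Once this integrability is in hand, the commutator vanishes by dominated convergence (using $\|u-u_\epsilon\|_{2q'}\to 0$ for a.e.\ $\tau$), with no quantitative $\epsilon$-gain required. Your proposed alternative route through the mollification rate $\|u-u_\epsilon\|_2\lesssim\epsilon\|\nabla u\|_2$ is left entirely unexecuted --- no exponent bookkeeping is performed and no positive power of $\epsilon$ is exhibited --- so as written the proof of case $(j)$, i.e.\ of the part of the statement that goes beyond previously known criteria, is missing, and the missing ingredient is the interpolation against the Leray--Hopf energy scale rather than any commutator smallness.
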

By restriction of the above Theorem \ref{thm:largelev} (Theorem 4.3 in reference \cite{BV-JY}) to the case $r=2$ we obtain the following result:
\begin{proposition}\label{thm:largelev-N}
Let $u_0\in H$ and let $u$ be a Leray-Hopf weak solution of \eqref{eq:NS} in a smooth bounded domain $\Omega$. Let us assume that $\n u\in L^p(0,T;L^q(\Omega))$ for the following ranges of the exponents $p\,,q$:
\begin{description}
  \item[(i)$_1$] $\f{3}{2}<q\leq\f95\,,$ and $\n u\in L^{\f{q}{2q-3}}(0,T;L^q(\Omega))$;
 \item[(ii)$_1$] $\f95<\,q\,$, and $\n u\in L^{\f{5q}{5q-6}}(0,T;L^q(\Omega))$\,;
\end{description}
Then $u$ satisfies the energy equality \eqref{eq:energ}.
\end{proposition}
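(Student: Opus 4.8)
The statement is presented as the specialization of Theorem~\ref{thm:largelev} to the Newtonian exponent $r=2$, and this is exactly how I would prove it: there is nothing genuinely new to establish beyond checking that the four cases of Theorem~\ref{thm:largelev} collapse, at $r=2$, precisely onto the two cases (i)$_1$ and (ii)$_1$ listed here, and that the dissipation term in \eqref{eq:energ} can be rewritten in the $\na u$-form. So the plan is: (1) substitute $r=2$ into the ranges and time-exponents of Theorem~\ref{thm:largelev}; (2) discard the cases that become empty; (3) translate the energy identity.

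For step (1), in case \textbf{(i)$_1$} of Theorem~\ref{thm:largelev} the admissible viscosity range $\f95<r\le 2$ contains $r=2$ as its right endpoint. Setting $r=2$, the spatial range $\f{9-3r}{2}<q\le\f95$ becomes $\f32<q\le\f95$, and the time-integrability exponent simplifies as
\begin{equation}\label{redu-i}
\f{q(5r-9)}{3r+2q-9}\Big|_{r=2}=\f{q(10-9)}{6+2q-9}=\f{q}{2q-3},
\end{equation}
which is exactly the exponent in case (i)$_1$ of the Proposition. Case \textbf{(ii)$_1$} of Theorem~\ref{thm:largelev} imposes only $r<\f{11}{5}$ on the viscosity, satisfied by $r=2$, together with $\f95<q$ and the exponent $\f{5q}{5q-6}$, which does not depend on $r$ and so passes unchanged to case (ii)$_1$ here.

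For step (2), cases \textbf{(i)$_2$} and \textbf{(ii)$_2$} of Theorem~\ref{thm:largelev} require $r>2$ and $r\ge\f{11}{5}$ respectively, hence neither is active at $r=2$; they contribute nothing and may be discarded. Thus the hypotheses of the Proposition are precisely the $r=2$ slice of the hypotheses of Theorem~\ref{thm:largelev}, and the latter yields \eqref{eq:energ} at $r=2$.

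Step (3) is the only point deserving care, and I regard it as the main (mild) obstacle. I would translate the $D(u)$-based dissipation back to the gradient form using the Korn-type identity $\int_\Om|D(u)|^2\,dx=\f12\int_\Om|\na u|^2\,dx$, valid for a divergence-free field vanishing on $\pa\Om$: one expands $|D(u)|^2$ and integrates the cross term by parts, the boundary contribution vanishing by the no-slip condition and the remaining term vanishing because $\na\cdot u=0$. Substituting this into \eqref{eq:energ} with $r=2$ turns it into the Navier--Stokes energy balance in the $\na u$ variables; the resulting factor $\f12$ is simply the viscosity normalization implicit in writing \eqref{eq:non-Newtonian system} at $r=2$ rather than \eqref{eq:NS}, and it does not affect the equality itself. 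The only thing to watch is that \emph{both} the solenoidal constraint and the boundary condition be invoked when discarding the cross term; everything else is bookkeeping.
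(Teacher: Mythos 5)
Your proposal is correct and follows exactly the route the paper intends: the paper offers no separate argument, simply declaring the proposition to be the restriction of Theorem~\ref{thm:largelev} to $r=2$, which is precisely your steps (1) and (2), and your arithmetic check $\f{q(5r-9)}{3r+2q-9}\big|_{r=2}=\f{q}{2q-3}$ and the discarding of (i)$_2$, (ii)$_2$ are right. Your step (3), reconciling the $D(u)$-based dissipation in \eqref{eq:energ} with the $\na u$-based system \eqref{eq:NS} via the Korn identity, is a point the paper glosses over entirely, and your handling of it is correct.
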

\begin{remark}\label{rem:G-N}
For $q>3\,$ the item(ii)$_1$ improves the item (jjj) in Theorem \ref{bers-chio}. Furthermore the item (j) in this last theorem is improved by item (k) below, for $\,q\leq \f{12}{7}\,,$ by appealing to the result stated in the Proposition \ref{corshin}. This allow us to replace this proposition by the following stronger result.
\end{remark}
\par%
\begin{theorem}\label{thm:largelev-New}
Let $u_0\in H$ and let $u$ be a Leray-Hopf weak solution of \eqref{eq:NS} in a smooth bounded domain $\Omega$. Let us assume that $\n u\in L^p(0,T;L^q(\Omega))$ for the following ranges of the exponents $p\,,q$:
\begin{description}
\item[(k)] $\f32 \leq q \leq \f{12}{7}\,,$ and
$\n u \in\, L^\f{6q}{5q-6}(0,\,T; L^q(\Om))\,;$
 \item[(kk)] $\f{12}{7}<q\leq\f95\,,$ and $\n u\in L^{\f{q}{2q-3}}(0,T;L^q(\Omega))$;
 \item[(kkk)] $\f95<\,q\,$,  and  $\n u\in L^{\f{5q}{5q-6}}(0,T;L^q(\Omega))$\,;
\end{description}
Then $u$ satisfies the energy equality \eqref{eq:energ}.
\end{theorem}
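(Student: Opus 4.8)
The plan is to dispose of items (kk) and (kkk) by a direct reduction to Proposition \ref{thm:largelev-N}, and to concentrate the actual work on the genuinely new item (k). Indeed, item (kk) is merely the restriction of item (i)$_1$ of Proposition \ref{thm:largelev-N} to the subrange $\f{12}{7}<q\le\f95$, while item (kkk) is verbatim item (ii)$_1$ of that proposition; hence for these two ranges the energy equality \eqref{eq:energ} is already at our disposal and nothing further is required.

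For item (k) I would first convert the hypothesis on $\na u$ into a hypothesis on $u$ through the Sobolev embedding $W^{1,q}\subset L^{q^{*}}$, $q^{*}=\f{3q}{3-q}$. Writing $s=q^{*}$, the assumption $\na u\in L^{\f{6q}{5q-6}}(0,T;L^q(\Om))$ yields $u\in L^{\f{6q}{5q-6}}(0,T;L^s(\Om))$, and as $q$ ranges over $[\f32,\f{12}{7}]$ the exponent $s$ ranges over $[3,4]$. A short computation rewrites this precisely as $u\in L^{\f{2s}{s-2}}(0,T;L^s(\Om))$, that is, Shinbrot's class with Shinbrot number $\tha^{*}=1$, but now for $s\in[3,4]$, strictly below the classical threshold $s\ge4$ covered by Theorem \ref{theo-main}(i). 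In the spirit of Proposition \ref{corshin} and of the interpolation proof of Theorem \ref{theo-main}, the strategy is then to prove \eqref{eq:energ} for this borderline class by the Lions--Shinbrot mollification scheme: mollify \eqref{eq:NS} in time, test with the mollified velocity, and show that the commutator in the convective term vanishes in the limit. The key is to estimate the convective/commutator contribution by pairing the \emph{gradient} bound $\na u\in L^{\f{6q}{5q-6}}(L^q)$ directly against norms of $u$ furnished by interpolating the Leray--Hopf bounds $u\in L^\infty(0,T;L^2)\cap L^2(0,T;L^6)$ with the Sobolev consequence $u\in L^{\f{6q}{5q-6}}(L^{s})$, checking that the resulting Hölder relations, in space and in time, close for every $q\in[\f32,\f{12}{7}]$.

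The hard part will be the endpoint $q=\f32$, i.e. $s=3$, where the condition degenerates to $u\in L^6(0,T;L^3(\Om))$ (still $\tha^{*}=1$). Here the easy route collapses: interpolating this condition with $L^2(0,T;L^6)$ does not reach the time integrability $L^{\f{s}{s-3}}$ needed to recover the base case $u\in L^4((0,T)\times\Om)$ that drives Theorem \ref{theo-main}---at $s=3$ that would demand $u\in L^\infty(L^3)$, the Ladyzhenskaya--Prodi--Serrin condition---so the reduction does not close, and the crude estimate of $u\otimes u$ in the space dual to $\na u$ is equally unavailable. This is exactly the phenomenon flagged in Remark \ref{lembrar}: the $u$-consequence of (k) does not even imply the weaker assumption \eqref{xis}. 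The remedy is to keep the gradient bound active and to absorb the borderline exponents by using $\na u$ itself in the convective term---not merely its Sobolev image---as in \cite{BV-JY}. The interior exponents $\f32<q<\f{12}{7}$ are milder, interpolating between this critical endpoint and the benign endpoint $q=\f{12}{7}$, where $s=4$ and the hypothesis already gives $u\in L^4((0,T)\times\Om)$, which yields \eqref{eq:energ} at once.
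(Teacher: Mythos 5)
Your treatment of items (kk) and (kkk) is exactly the paper's: both are read off from Proposition \ref{thm:largelev-N} (its items (i)$_1$ and (ii)$_1$, i.e.\ Theorem 4.3 of \cite{BV-JY} restricted to $r=2$), and nothing further is needed there. Your preliminary computation for item (k) is also correct and worth keeping: with $s=q^{*}=\f{3q}{3-q}$ one has $\f{6q}{5q-6}=\f{2s}{s-2}$, so the Sobolev image of (k) is the Shinbrot class $u\in L^{\f{2s}{s-2}}(0,T;L^s(\Om))$ with $\tha=1$ but $s\in[3,4]$; this is covered by Theorem \ref{theo-main}(i) only at the endpoint $s=4$ (i.e.\ $q=\f{12}{7}$), and for $s<4$ it is a strictly weaker integrability requirement than the class $L^{\f{s}{s-3}}(L^s)$ of Theorem \ref{theo-main}(ii).

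From that point on, however, you do not have a proof of (k). You assert that the H\"older and interpolation relations ``close for every $q\in[\f32,\f{12}{7}]$'' without exhibiting them, you then show yourself that they do not close at $q=\f32$, and the concluding appeal to ``interpolating between the critical endpoint and the benign endpoint'' is vacuous, since one cannot interpolate from an endpoint that has not been established; moreover the gradient estimates of \cite{BV-JY} that you invoke as the remedy are precisely those that produce the exponent $\f{q}{2q-3}$ of item (i)$_1$, not $\f{6q}{5q-6}$. Note also that this is not the paper's route: the paper justifies (k) in Remark \ref{rem:G-N} by ``appealing to the result stated in the Proposition \ref{corshin}'', i.e.\ by reduction to the $u$-type condition \eqref{esses} via Sobolev embedding. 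Your own computation shows that this reduction demands the time exponent $\f{q}{2q-3}$ and therefore reaches (k) only at $q=\f{12}{7}$; consistently, Remark \ref{lembrar} itself records that (k) does not even imply \eqref{xis}. So you have correctly isolated a genuine difficulty in item (k) --- its Sobolev image falls outside every $u$-type criterion available in the paper --- but neither your sketch nor the argument indicated in the paper supplies the missing estimate on the range $\f32\le q<\f{12}{7}$.
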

\begin{remark}\label{rem:B-GE}
In the theorem 2 in reference \cite{BE-GEO} it is essentially shown, for the torus and $q>2\,,$ that results of the above kind also apply to the Euler equations. A quite unexpected result.
\end{remark}
\begin{subsection}{On a very general result obtained by interpolation.}
We start by quoting the unpublished 2008 paper "On the energy equality for the Navier-Stokes problem", by Carlo R. Grisanti, where this author proved quite interesting results in the wake of reference \cite{CFS}. Note that $m$ is a positive real, not just an integer. The main Grisanti's result was the following.
\begin{theorem}\label{th:grisanti}
(C.R. GRISANTI)  Let $u$ be a weak solution of the Navier-Stokes equations \eqref{eq:NS}. Then the energy equality holds in the following cases:
\begin{equation}\label{carlo}
\begin{cases}
(j)\quad u \in\, L^{\frac{5+ 12m}{6m}}(0,\,T; W^{m,2}(\Om))\,, \quad \f56 \leq m<\,1\,;\\
(jj)\quad u \in\, L^{\frac{5}{2m}}(0,\,T; W^{m,2}(\Om))\,, \quad  1\leq m<\,\f54\,;\\
(jjj)\quad u \in\, L^2(0,\,T; W^{m,2}(\Om))\,, \quad \f54 \leq m<\,\f32\,.
\end{cases}
\end{equation}
\end{theorem}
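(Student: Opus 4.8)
The plan is to reduce each of the three cases to an energy-equality criterion that is already available, exploiting the two bounds that every Leray-Hopf solution satisfies for free, namely the energy bound $u\in L^\infty(0,T;L^2(\Omega))$ and the dissipation bound $u\in L^2(0,T;H^1(\Omega))$, and interpolating them against the assumed regularity in the Sobolev scale $W^{m,2}$. The natural dividing line is $m=1$: for $m\geq 1$ the hypothesis controls a genuine first derivative $\nabla u$, so one can pass to a condition of $\nabla u$-type and invoke Berselli--Chiodaroli (Theorem \ref{bers-chio}); for $m<1$ only a fractional derivative is at hand, and a direct flux/commutator estimate of the type in \cite{CFS} is needed.

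For the cases $1\leq m<\frac54$ (jj) and $\frac54\leq m<\frac32$ (jjj) I would argue as follows. From $u\in L^p(0,T;W^{m,2}(\Omega))$ one gets $\nabla u\in L^p(0,T;W^{m-1,2}(\Omega))$, and since $0\leq m-1<\frac12$ the Sobolev embedding $W^{m-1,2}\subset L^q$ with $q=\frac{6}{5-2m}\in[2,3)$ yields $\nabla u\in L^p(0,T;L^q(\Omega))$. A short computation shows that the Berselli--Chiodaroli time exponent for this $q$, namely $\frac{5q}{5q-6}$, equals exactly $\frac{5}{2m}$; hence in case (jj), where $p=\frac{5}{2m}$, the hypothesis of Theorem \ref{bers-chio}(jj) is met on the nose, whereas in case (jjj), where $p=2\geq\frac{5}{2m}$, it is met with room to spare, more time integrability being harmless on a finite interval. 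This produces the energy equality in both cases.

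For the remaining case $\frac56\leq m<1$ (j) I would interpolate the assumption against the energy bound instead. The Gagliardo--Nirenberg inequality $\|u\|_{H^{5/6}}\leq C\,\|u\|_{L^2}^{1-\theta}\|u\|_{H^m}^{\theta}$ with $\theta=\frac{5}{6m}$, combined with $u\in L^\infty(0,T;L^2(\Omega))$, shows that $u\in L^3(0,T;H^{5/6}(\Omega))$ as soon as $3\theta\leq p$, i.e.\ $p\geq\frac{5}{2m}$; the stated exponent $p=\frac{5+12m}{6m}$ satisfies this (and in fact exceeds it for $m>\frac56$). The embedding $H^{5/6}=B^{5/6}_{2,2}\subset B^{1/3}_{3,2}\subset B^{1/3}_{3,\infty}$ then places $u$ in the Onsager-critical class $L^3(0,T;B^{1/3}_{3,\infty})$, whence the energy equality follows from the commutator argument of \cite{CFS}.

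The step I expect to be the genuine obstacle is the boundary. Everything above is transparent on the torus or on $\mathbb{R}^3$, but on a bounded domain under the no-slip condition one cannot mollify freely in space, and both the passage $\nabla u\in L^p W^{m-1,2}\subset L^p L^q$ and the CFS-type commutator estimate must be carried out with test objects compatible with $u|_{\partial\Omega}=0$. Making the cancellation of the nonlinear flux rigorous up to $\partial\Omega$ --- the recurring difficulty already confronted in \cite{BC} and \cite{BV-JY} --- is where the real work lies; the endpoint $m\to\frac32$, where $H^m\subset L^\infty$ and $q\to3$, should then be recovered by the same scheme together with a limiting argument.
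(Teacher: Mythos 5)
Your reductions are correct in substance, but note first that the paper does not actually prove this theorem: it quotes it from Grisanti's unpublished manuscript and observes that all three cases follow from (indeed are improved by) Theorem \ref{pron}, whose single condition $u\in L^{\frac{5}{2m}}(0,T;W^{m,2}(\Omega))$ for $m\geq \frac56$ absorbs (j), (jj) and (jjj) at once, since $\frac{5+12m}{6m}\geq\frac{5}{2m}$ for $m\geq\frac56$ and $2\geq\frac{5}{2m}$ for $m\geq\frac54$. Your route is genuinely different for (jj) and (jjj): passing to $\nabla u\in L^p(0,T;W^{m-1,2}(\Omega))\subset L^p(0,T;L^q(\Omega))$ with $q=\frac{6}{5-2m}$ and checking $\frac{5q}{5q-6}=\frac{5}{2m}$ is a clean and complete reduction to Theorem \ref{bers-chio}(jj) (the constraint $q\geq\frac95$, i.e.\ $m\geq\frac56$, is comfortably satisfied), and it has the advantage of delegating all boundary issues to a published bounded-domain result. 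The paper's interpolation route is more unified and, as Remark \ref{lembrar} warns, avoids the loss inherent in comparing $u$-type and $\nabla u$-type conditions through Sobolev embedding; your reduction happens to lose nothing here only because the exponents match exactly.

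There is, however, one genuine slip in case (j). After correctly interpolating to $u\in L^3(0,T;H^{5/6}(\Omega))$, you descend to $B^{1/3}_{3,\infty}$ and claim the energy equality "follows from the commutator argument of \cite{CFS}" in the class $L^3(0,T;B^{1/3}_{3,\infty})$. That endpoint class is exactly the one in which the commutator/flux estimate does \emph{not} close: the flux bound is $O(1)$ rather than $o(1)$, and one needs the smaller space $B^{1/3}_{3,c(\mathbb{N})}$ (as in \cite{CCFS}) for the argument to conclude. The fix is immediate --- either stop one step earlier at $B^{1/3}_{3,2}$, which does embed in the good class, or better, stop at $L^3(0,T;H^{5/6}(\Omega))$ itself, which is (up to the identification of $D(A^{5/12})$ with $W^{5/6,2}$ discussed in the last section of the paper) precisely the hypothesis of the bounded-domain theorem of \cite{CFS}. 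As written, though, the final sentence of your case (j) invokes a criterion that is false at the stated regularity, so it should not stand.
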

However the referee remarked that the same results could be obtained by interpolation. In fact, in the proposition below, obtained by this last technique, Grisanti's unpublished results are even improved. The following result is the Proposition 4.6 in reference \cite{BV-JY}.
\begin{theorem}\label{pron}
Let $u$ be a Leray-Hopf weak solution of the Navier-Stokes equations \eqref{eq:NS}, and assume that one of the following assumptions hold.
\begin{equation}\label{pqcases}
\begin{cases}
(i)\quad u \in\, L^{\f{2}{2m-1}}(0,\,T; W^{m,2}(\Om))\,,\quad \textrm{for} \quad  \f12 <m \leq \f56;\\
(ii)\quad u \in\, L^{\f{5}{2m}}(0,\,T; W^{m,2}(\Om))\,,\quad \textrm{for} \quad  m\geq \f56.\\
\end{cases}
\end{equation}
Then $\,u\,$ satisfies the energy equality.
\end{theorem}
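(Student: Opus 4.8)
The plan is to prove the energy equality by an interpolation argument, keeping at hand throughout the a priori bounds that every Leray--Hopf solution enjoys, namely $u\in L^\infty(0,T;L^2(\Om))\cap L^2(0,T;H^1(\Om))$, so that also $u\in L^2(0,T;L^6(\Om))$ by Sobolev's embedding. As in the mollification proofs underlying Theorem~\ref{theo-main} and Theorem~\ref{bers-chio}, I would reduce the energy equality to the vanishing of the regularization (Constantin--E--Titi type) commutator/energy--flux term, whose control, together with the viscous contribution $\na u\in L^2((0,T)\times\Om)$, is what the integrability hypotheses on $u$ must secure. The guiding principle is the warning recorded after Proposition~\ref{corshin}: the numbers $\tha$ and $\tha^{*}$ cannot be compared across different numbers of derivatives, so the fractional regularity $W^{m,2}$ must be exploited directly rather than replaced by a bare Sobolev embedding.

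For the large--regularity case (ii), $m\geq\frac56$, the natural route is through the gradient. When $m\geq1$ one has $\na u\in W^{m-1,2}\subset L^{q}(\Om)$ with $q=\frac{6}{5-2m}>\frac95$, and since the prescribed time exponent satisfies $\frac{5}{2m}=\frac{5q}{5q-6}$, the hypothesis is \emph{exactly} condition (ii)$_1$ of Proposition~\ref{thm:largelev-N} (the Newtonian restriction of the Berselli--Chiodaroli result), which already yields the energy equality. For $m\geq\frac52$, where this embedding saturates, I would first interpolate the hypothesis against $u\in L^\infty(0,T;L^2)$ so as to descend to an intermediate regularity $\sigma\in[1,\frac52)$, landing again on the admissible curve $\na u\in L^{5/(2\sigma)}(0,T;L^{6/(5-2\sigma)})$ of Proposition~\ref{thm:largelev-N}.

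For the low--regularity case (i), $\frac12<m\leq\frac34$, one has $W^{m,2}\subset L^{s}(\Om)$ with $s=\frac{6}{3-2m}\in[3,4]$ and, the time exponent being tuned so that $\frac{2}{2m-1}=\frac{s}{s-3}$, the hypothesis is exactly case (ii) of Theorem~\ref{theo-main}, which again gives the energy equality. The two prescriptions glue at $m=\frac56$, where both read $u\in L^3(0,T;W^{5/6,2})$, and one checks directly that the required time exponent is $\max\big(\frac{2}{2m-1},\frac{5}{2m}\big)$, the $u$--type branch and the $\na u$--type branch crossing precisely at $m=\frac56$.

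The genuinely delicate part is the intermediate window $\frac34<m<1$ surrounding the breakpoint, where $m<1$ forbids placing $\na u$ in any $L^q$ while $s=\frac{6}{3-2m}>4$ puts the bare embedding outside the admissible range of Theorem~\ref{theo-main} (indeed there every available endpoint has Shinbrot number $\geq1$, so no convex interpolation reaches the region $\tha<1$). Here I would run the mollification proof directly, estimating the flux term by a Gagliardo--Nirenberg interpolation among $\|u\|_{W^{m,2}}$, the dissipation $\|\na u\|_{L^2}$ and $\|u\|_{L^2}$, and then optimizing the regularization scale $\de$. I expect this optimization to be the main obstacle: the competition between the commutator gain and the loss carried by the mollified gradient is what produces the two critical exponents $\frac{2}{2m-1}$ and $\frac{5}{2m}$ and forces them to meet at $m=\frac56$, and securing this borderline balance sharply, rather than an off--by--an--endpoint version of it which would only recover Grisanti's Theorem~\ref{th:grisanti}, is where the real work lies.
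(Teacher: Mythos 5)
There is a genuine gap, and you flag it yourself: on the window $\f34<m<1$ (which covers case (i) for $\f34<m\le\f56$ and case (ii) for $\f56\le m<1$) you do not give a proof at all, only a plan to ``run the mollification proof directly'' and optimize the regularization scale $\de$, while conceding that this optimization ``is where the real work lies.'' That window is not a corner case: it contains the breakpoint $m=\f56$ on both sides and is precisely where the exponents $\f{2}{2m-1}$ and $\f{5}{2m}$ are sharp, so what you have actually established (the reductions to Theorem \ref{theo-main}(ii) for $m\le\f34$ and to Proposition \ref{thm:largelev-N}(ii)$_1$ for $m\ge1$, both of which check out) adds nothing beyond results already in the paper.

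The missing idea is much lighter than a Constantin--E--Titi commutator estimate, and it is the one the paper explicitly names: interpolate the hypothesis against the Leray--Hopf bounds $u\in L^\infty(0,T;L^2)\cap L^2(0,T;W^{1,2})$ so as to land, for \emph{every} $m$ in both ranges, on the single known endpoint $u\in L^3(0,T;W^{5/6,2})$ --- the Cheskidov--Friedlander--Shvydkoy condition $A^{5/12}u\in L^3(0,T;L^2)$, i.e.\ Grisanti's case (j) at $m=\f56$ --- which already yields EE. Concretely, for (i) with $\f12<m\le\f56$ write $\|u\|_{W^{5/6,2}}\le\|u\|_{W^{m,2}}^{1-\la}\|u\|_{W^{1,2}}^{\la}$ with $\la=\f{5/6-m}{1-m}$; H\"older in time closes because
\begin{equation*}
3(1-\la)\cdot\f{2m-1}{2}\,+\,3\la\cdot\f12\;=\;\f{2m-1}{4(1-m)}+\f{5-6m}{4(1-m)}\;=\;1\,.
\end{equation*}
For (ii) with $m\ge\f56$ write $\|u\|_{W^{5/6,2}}\le\|u\|_{L^2}^{1-\la}\|u\|_{W^{m,2}}^{\la}$ with $\la=\f{5}{6m}$, and $3\la\cdot\f{2m}{5}=1$ with the $L^\infty(L^2)$ factor costing nothing. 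This covers the whole range uniformly, including your problematic window. You in fact hold all the pieces --- you keep the a priori bounds ``at hand'' and you notice that both branches cross at $L^3(0,T;W^{5/6,2})$ --- but you interpolate in the wrong direction, pushing the hypothesis down into $L^s$ or $L^q$ via Sobolev embedding (exactly the lossy step Remark \ref{lembrar} warns about) instead of interpolating the smoothness scale $W^{m,2}$ toward that common endpoint.
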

\end{subsection}
%
\begin{section}{A new reading key of Shinbrot's type numbers.}\label{chave}
This chapter is of particular interest here. It concerns the comparison of the strength of distinct results by appealing to the SH-numbers. The classical SH number $\tha\,$ looks significant in the $u-$case, but in $\n u-$case the SH-number $\tha^{*}\,$ is totally inappropriate. In fact the SH number $\tha^{*}\,$ obtained simply from $\tha\,$ by appealing to Sobolev's embedding, shows a dramatic dependence on $q$. However a new point of view gives meaning to this attempt, as shows in the Theorem below. As still claimed in reference \cite{BV-JY} this result holds for $r\neq 2\,$, with essentially the same proof (which does not use the $r-$weak estimate).
\begin{theorem}\label{thm:BV}
Let $u_0\in H\,,$ and let $u$ be a Leray-Hopf weak solution of \eqref{eq:NS} in a smooth bounded domain $\Omega$. Further assume that one of the two following assumptions hold.\par%
\begin{description}
  \item[(i)] $u\in L^{p_1}(0,T;L^{q_1}(\Omega))\cap L^{\f{9q}{8q-9}}(0,T;W^{1,q}(\Omega))\,,\quad $\\ with $\quad \frac{2}{p_1}+\,\frac{2}{q_1}=\,\frac{10}{9}\,,\quad$
  $\f98<q\,,\quad $ and $\quad q_1 \geq\,\f{2 q}{q-\,1}\,.$
  \item[(ii)] $ u\in L^{\f{5q}{5q-6}}(0,T;W^{1,q}(\Omega))\,,\quad$ with $\quad \f95<\,q\,.$
\end{description}
Then $u$ satisfies the energy equality
\begin{equation*}
\|u(t_0)\|_2^2+2\int_0^{t_0}\|D(u)(\tau)\|^2_2 d\tau=\|u_0\|_2^2\,,
\end{equation*}
for all  $t_0 \in\,[0,T).$
\end{theorem}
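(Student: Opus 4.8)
The plan is to reduce the energy equality to a single integrability statement about the convective term, and then to verify that statement from the hypotheses by interpolation. Being a Leray--Hopf solution, $u$ already satisfies the energy inequality together with $u\in L^\infty(0,T;H)\cap L^2(0,T;H^1_0(\Om))$ and weak continuity into $H$, so only the reverse inequality is at stake. To obtain it I would regularize $u$ in time by a Friedrichs mollifier $\rho_\de$, use $u^\de=\rho_\de * u$ as an admissible (divergence-free, smooth in $t$) test function in the weak formulation, and let $\de\to0$. The time-derivative term produces $\tfrac12\bigl(\|u(t_0)\|_2^2-\|u_0\|_2^2\bigr)$, the viscous term produces $\int_0^{t_0}\|\n u\|_2^2\,d\ta$ by the $L^2$ regularity of $\n u$, the pressure drops out against the divergence-free field, and the \emph{only} term requiring the extra hypotheses is the convective one. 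Thus the whole matter reduces to showing that, under either (i) or (ii),
\[
\int_0^{t_0}\!\!\int_\Om |u|^2\,|\n u|\,dx\,d\ta<\infty .
\]
This bound makes $(u\cdot\n)u\cdot u=\tfrac12\,u\cdot\n|u|^2$ absolutely integrable, so that the integration by parts is licit, the trilinear term $b(u,u,u)$ vanishes by $\n\cdot u=0$, and the mollified convective term converges to this vanishing limit as $\de\to0$; the energy equality then follows.

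For case (ii) I would argue by H\"older in space followed by a Gagliardo--Nirenberg interpolation. Since $\tfrac{q-1}{q}+\tfrac1q=1$, H\"older gives $\int_\Om|u|^2|\n u|\,dx\le\|u\|_{L^{2q/(q-1)}}^2\,\|\n u\|_{L^q}$, and Gagliardo--Nirenberg (combined with Poincar\'e, so that $\|u\|_{W^{1,q}}\simeq\|\n u\|_{L^q}$) yields $\|u\|_{L^{2q/(q-1)}}\le C\,\|u\|_{L^2}^{1-\te}\,\|\n u\|_{L^q}^{\te}$ with $\te=\tfrac{3}{5q-6}$; this exponent is admissible precisely when $q>\tfrac95$, which is exactly the standing restriction. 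Using the uniform bound $\sup_t\|u\|_2<\infty$ and integrating in time, the power of $\|\n u\|_{L^q}$ that must be controlled is $2\te+1=\tfrac{5q}{5q-6}$, which is \emph{exactly} the time exponent assumed in (ii); the displayed integral is therefore finite.

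For case (i) I would instead split the two factors of $u$ between the two standing assumptions: one factor is estimated through the Shinbrot-type bound $u\in L^{p_1}(0,T;L^{q_1})$ and the other through the Sobolev embedding $W^{1,q}\subset L^{q^{*}}$ furnished by $u\in L^{9q/(8q-9)}(0,T;W^{1,q})$, while $\n u$ is taken in $L^q$. The constraint $q_1\ge \tfrac{2q}{q-1}$ guarantees that the spatial H\"older exponents are conjugate (there is room to spare since $\Om$ is bounded), and the Shinbrot relation $\tfrac{2}{p_1}+\tfrac{2}{q_1}=\tfrac{10}{9}$, matched against the time exponent $\tfrac{9q}{8q-9}$ of $\n u$, is precisely what renders the space--time H\"older product integrable, the borderline being realized at $q_1=\tfrac{2q}{q-1}$. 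I expect this bookkeeping to be the main obstacle: one must choose the interpolation split and check conjugacy \emph{simultaneously} in $x$ and in $t$, and it is exactly this double balancing that forces the particular Shinbrot value $\tfrac{10}{9}$. The secondary technical point, common to both cases, is the rigorous justification of the $\de\to0$ passage in the convective term (the commutator/mollification limit), which is legitimate precisely because of the finiteness established above.
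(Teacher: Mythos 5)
First, a point of reference: the paper you were given does not actually prove Theorem \ref{thm:BV}; it restates Theorem 4.4 of \cite{BV-JY} and only discusses its interpretation via Shinbrot numbers. Your overall strategy nevertheless coincides with the one used in \cite{BV-JY}: reduce the energy equality, via time-mollification of the test function, to the finiteness of $\int_0^{T}\int_\Omega |u|^2|\n u|\,dx\,d\tau$ (this is precisely the ``very basic result'', Proposition 4.1 of \cite{BV-JY}, alluded to in Section 3), and then verify that bound by H\"older plus interpolation. Your treatment of case (ii) is complete and correct: the Gagliardo--Nirenberg exponent $\te=\f{3}{5q-6}$ and the resulting time power $2\te+1=\f{5q}{5q-6}$ are exactly right, and $q>\f95$ is exactly the admissibility threshold.

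Case (i), however, does not close as you describe it. Putting ``one factor of $u$ in $L^{q_1}$ and the other in $L^{q^{*}}$'' requires the spatial H\"older budget $\f{1}{q_1}+\f{1}{q^{*}}+\f{1}{q}\le 1$, i.e. $\f{1}{q_1}\le\f{4q-6}{3q}$; this is violated for every admissible $q_1$ when $q<\f32$ (the right-hand side is nonpositive) and, at the borderline $q_1=\f{2q}{q-1}$, for every $q<\f95$ --- which is the whole range where item (i) is actually of interest. The constraint $q_1\ge\f{2q}{q-1}$ does \emph{not} make your exponents conjugate; what it guarantees is that $\f{2q}{q-1}$ lies in the interpolation interval between $q_1$ and $q^{*}$. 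The correct bookkeeping is: H\"older in space as $\|u\|_{L^{2q/(q-1)}}^{2}\|\n u\|_{L^q}$, then interpolate
\begin{equation*}
\|u\|_{L^{2q/(q-1)}}\le \|u\|_{L^{q_1}}^{1-\la}\,\|u\|_{L^{q^{*}}}^{\la}\,,\qquad \f{q-1}{2q}=\f{1-\la}{q_1}+\f{\la}{q^{*}}\,,
\end{equation*}
with $\la$ depending on $q$ and $q_1$ (one checks $\la\in[0,1]$ exactly when $q_1\ge\f{2q}{q-1}$ and $q\le\f95$, with the bounded-domain slack covering $q>\f95$). After Sobolev on the $L^{q^{*}}$ factor one needs the time condition $\f{2(1-\la)}{p_1}+(2\la+1)\f{8q-9}{9q}\le 1$, and the computation shows that, once $\f{2}{p_1}+\f{2}{q_1}=\f{10}{9}$ and the interpolation identity above are imposed, this holds with equality \emph{for every} $\la$ --- that is the actual mechanism that singles out the Shinbrot value $\f{10}{9}$ and the time exponent $\f{9q}{8q-9}$. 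A fixed split ($\la=0$, $\f12$, or $1$) fails either in space or in time except at isolated parameter values (e.g. $\la=0$ works only when $q_1=\f{2q}{q-1}$ exactly). So the idea is right, but the step you flagged as ``the main obstacle'' is genuinely where your argument, as written, breaks down, and the repair is the $\la$-interpolation above.
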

\begin{remark}
In item (i) both spaces enjoy the same Shinbrot's number $\,\tha=\tha^{*}=\frac{10}{9}\,,$
for all $q < 3\,.$ Hence our main aim is fulfilled: Independently of the value of $q\,$ all the assumptions (i) have the same SH number $\f{10}{9}\,.$ However, for  $q>\,\f95\,,$ item (i) holds but is overshadowed by the more general item (ii). This overlap does not invalidate its broad validity (note that item (ii) has SH-number $\tha^{*}> \f{10}{9}\,$).
\end{remark}
\begin{remark}
--- For $q<\,\f95\,,$ since $\f{9q}{8q-9} < \f{q}{2q-3}\,,$ we have weakens the condition $\,\n u \in\, L^{\frac{q}{2q-\,3}}(0,\,T; L^q(\Om))\,$ up to the desired level $\theta=\,\f{10}{9}$ enjoyed by $\,\n u\in  L^{\f{9q}{8q-9}}(0,T;L^q(\Omega))\,,$ compensated by the addition of an assumption on $\,u$ which enjoys just the same level $\theta=\,\f{10}{9}$.\par%
--- For $q=\,\f95\,$ the intersection of the two spaces in item (i) coincides with the second space, and the sufficient condition becomes simply $\,u \in L^3 (0,\,T; W^{1,\,\frac{9}{5}}(\Omega))\,.$ This is just the only case that, in reference \cite{BC}, yields the strongest result (by the $\tha^{*}$ criterium), namely $\tha^{*}=\,\f{10}{9}\,,$ always reached up by the new criterium in (i) Theorem \ref{thm:BV}.\par%
--- For $q>\f95\,$ the assumption in item (i) is superseded by the broader assumption in item (ii)$_1$ of Theorem \ref{thm:largelev} since
\[
L^{p_1}(0,T;L^{q_1}(\Omega))\cap L^{\f{9q}{8q-9}}(0,T;W^{1,q}(\Omega))\subset L^{\f{9q}{8q-9}}(0,T;W^{1,q}(\Omega))\subset L^{\f{5q}{5q-6}}(0,T;W^{1,q}(\Omega))\,.
\]
\end{remark}
For this reason it seemed appropriate to have inserted this assumption in the above theorem as item (ii).

\vspace{0.2cm}

In reference \cite{BC} the authors warn against the risk of too simplified comparison between sufficient conditions fo EE in terms of distinct functional spaces, in particular between Sobolev and H\"older spaces. We strongly agree this warn. Actually, even comparison between conditions in terms of $u$ and $\n u$ via Sobolev's embedding theorems could be misleading, as we still have seen.

\vspace{0.2cm}

We end this section by quoting reference \cite{WMH} where a very general set of quite interesting results is stated. We strongly refer the reader to the original paper. However let's remark that in the Theorem 1.1 in the above reference the two functional spaces appearing in its equation (1.9) have the same Shinbrot numbers, $\theta= \tha^{*}\,$ if and only if $k=\f{9l}{8l-9}$, which is just the assumption in Theorem 4.4, item (i) in reference \cite{BV-JY}  (Theorem \ref{thm:BV} in the present notes), necessary to obtain the SH-number's interpretation described in section \ref{chave}, which was our specific purpose.%
\end{section}
\begin{section}{Some references.}
In the bounded domain case, Cheskidov, Friedlander, and Shvydkoy \cite{CFS} proved that if $\, A^{\f5{12}}u\in L^3(0,T;L^2(\Omega)) $
then $u$ satisfies energy equality. This condition looks effectively equivalent to $\, u\in L^3(0,T;W^{\f56,2}(\Omega))\,,$
which is just assumption required in the Theorem \ref{carlo}, for $m=\f56\,.$\par%
On the other hand Farwig in reference \cite{Farwig} proved EE if $\, A^{\f14}u\in L^3(0,T;L^{\f{18}{7}}(\Omega))\,,$ which is formally equivalent to $\, u\in L^3(0,T;W^{\f12,\f{18}{7}}(\Omega))\,.$\par%
Other related main references are Cheskidov and Luo \cite{CL}, Shvydkoy \cite{shvy}, and Cheskidov, Constantin, Friedlander, and Shvydkoy \cite{CCFS}. We refer to \cite{BC}, section 2.2.1 for a report on  the above results, and interesting related considerations.%

\vspace{0.2cm}

In \cite{BC} Theorem 2 energy equality is studied for distributional solutions. As the authors remarked the techniques employed were inspired by G.P.Galdi's  references \cite{galdi-dist1} and \cite{galdi-dist2} on the same subject.\par%

\vspace{0.2cm}

Let's move on to more recent results.
 For a very recent deep study on EE results for Euler and Navier-Stokes equations in different functional spaces, and on Onsager's conjecture, we refer to Berselli's contributions \cite{BE-22}, \cite{BE-GEO}, and \cite{BE-23}, the second one coauthored by Georgiadis. This set of results is important by itself but also due to the stimulating considerations done by the author. In references \cite{BE-22} and \cite{BE-23} spaces of H\"older continuous functions play a central role. As claimed by the author in the introduction, the proofs in reference \cite{BE-23} are quite accessible to a wide audience.

Recently, Yang \cite{Yang-2} studied the energy equality of axisymmetric Navier-Stokes equations. A interesting fact is that it is enough to impose the Shinbrot condition to $\tilde{u}=u^re_r+u^z e_z$. It seems that this is a first result on the energy conservation law for the axisymmetric Navier-Stokes equations.
\vspace{0.2cm}

To end we would remark that a great part of the proofs can be easily extended to many other situations. Instead, a difficult problem of great interest is, in our opinion, to find relations between energy equality and uniqueness.
\end{section}


\begin{thebibliography}{99}
%
\bibitem{BV-JY}
Beir\~{a}o da Veiga,~H. and Yang,~J.: On the energy equality for solutions to Newtonian and non-Newtonian fluids, Nonlinear Analysis, \textbf{185} (2019), 388--402.
\bibitem{BV-JY-SHINB}
Beir\~{a}o da Veiga,~H. and Yang,~J.: On the Shinbrot's criteria for energy equality to Newtonian fluids: A simplified proof, and an extension of the range of application,  Nonlinear Analysis, \textbf{196} (2020).
\bibitem{BC}
Berselli, L.C. and Chiodaroli, E.: Remarks on the energy equality for weak solutions to Navier-Stokes equations, arXiv:1807.02667v3 [math.AP] 27 Jan 2019.
\bibitem{BC}
Berselli,~L. C. and Chiodaroli,~E.: On the energy equality for the 3D Navier-Stokes equations, Nonlinear Analysis, \textbf{192} (2020) 111704.
%
\bibitem{BE-22}
Berselli,~L.C.: Energy conservation for weak solutions of incompressible fluid equations: The H\"older case and connections with Onsager's conjecture, J. Diff. Eq.,\textbf{368} (2023), 350--375.
%
\bibitem{BE-GEO}
Berselli,~L. C. and Georgiadis,~S.: Three results on the energy conservation for the 3D Euler equations, arXiv:2307.04410v1 [math.AP] 10 Jul 2023.
%
\bibitem{BE-23}
Berselli,~L. C.: Remarks on the "Onsager singularity theorem" for Leray-Hopf weak solutions: The H\"older continuous case, Mathematics 2023,11,1062.
%
\bibitem{CCFS}
Cheskidov, A., Constantin,P., Friedlander, S. and Shvydkoy R.: Energy consenrvation and Onsager's conjecture for the Euler equations, Nonlinearity, 21 (6), 1233--1252 (2008).
\bibitem{CL}
Cheskidov, A. and  Luo, X.: On the energy equality for Navier-Stokes equations in weak-in-time Onsager spaces, arXiv:1802.05785v2
[math.AP] 20 Mar 2018.
\bibitem{CFS}
Cheskidov, A., Friedlander, S. and Shvydkoy R.: On the energy equality for weak solutions of the 3D Navier-Stokes equations. In Contributions to current challenges in mathematical fluid mechanics, Adv. Math. Fluid Mech., pages 171--175. Birkh\"auser, Basel, 2010.
\bibitem{Farwig}
Farwig, R.: On regularity of weak solutions to the instationary Navier-Stokes system: a review on recent results. Ann. Univ. Ferrara Sez. VII Sci. Mat., \textbf{60}(1), 91--122 (2014)
%
\bibitem{galdi-2000}
\newblock Galdi,~G.P.:
\newblock {An Introduction to the Navier-Stokes Initial-Boundary Value Problems.}
\newblock In: Galdi,~G.P., Heywood,~M.I., Rannacher,~R. (eds.)
              Fundamental Directions in Mathematical Fluid Mechanics.
              Advances in Mathematical Fluid Mechanics, pp. 1--70,
              Birkh\"auser, Basel (2000).%
 %
\bibitem{Galdi-nonew}
Galdi,~G.P.: Mathematical problems in classical and non-Newtonian fluid mechanics.  In Hemodynamical flows. Modeling, Analysis and Simulation, Oberwolfach Seminars, \textbf{37}, 121--273. Birkh\"auser, Basel, 2008.
%
%
\bibitem{galdi-dist1}
\newblock Galdi,~G.P.:
\newblock {On the relation between very weak and Leray-Hopf solutions to the Navier-Stokes Equations,} Proc. Amer. Math. Soc., 147(12), (2019) 5349-5359.
\bibitem{galdi-dist2}
\newblock Galdi,~G.P.:
\newblock {On the energy equality for distributional solutions to Navier-Stokes Equations,} Proc. Amer. Math. Soc., 147(2), (2019) 785-792.
%
\bibitem{lions}
\newblock Lions,~J.L.:
\newblock {Sur l'existence de solutions des \'equations de Navier-Stokes.}
\newblock  C. R. Acad. Sci. Paris, \textbf{248}, 2847--2849 (1959)%
%
%
\bibitem{Prodi}
Prodi,~G.: Un teorema di unicit\`a per le equazioni di Navier-Stokes, Ann. Mat. Pura Appl., \textbf{48}, 173--182 (1959)
%
\bibitem{shinbrot}
\newblock Shinbrot,~M.:
\newblock {The energy equation for the Navier-Stokes system.} SIAM J. Math.Anal., \textbf{5}, 948-954, (1974).
\bibitem{shvy}
\newblock Shvydkoy,~R.:
\newblock {A geometric condition implying an energy equality for solutions of the $3D$ Navier-Stokes equation.}, J. Dyn. Diff. Equat., \textbf{21}, 117-125 (2009).
%
\bibitem{Tem}
Temam,~R.:
\newblock {Navier-Stokes Equations. Theory and Numerical Analysis. With an appendix by F. Thomasset.}
\newblock: North-Holland Publishing Company, Amsterdam, New-York, Oxford, (1984). Navier-Stokes equations. Theory and numerical analysis. 
%
\bibitem{WMH}
Wang,~Y., Mei,~X., and Huang,~Y.: Energy equality of the 3D Navier-Stokes equations and generalized Newtonian equations. J. Math. Fluid Mech., \textbf{88}, 24--65 (2022).
%
\bibitem{Yang}
Yang, J.: The energy equality for weak solutions to the equations of non-Newtonian fluids. Applied Mathematics Letters, \textbf{88}, 216--221 (2019).
\bibitem{Yang-2}
Yang, J.: On the energy equality for axisymmetric weak solutions to the 3D Navier-Stokes equations. To appear in Rocky Mountain Journal of Mathematics.
\end{thebibliography}
\end{document}